%BeginFileInfo
%%Publisher=ARXIV
%%Project=AAP
%%Manuscript=AAP810
%%Stage=
%%TID=sandra.matulyte
%%Format=latex
%%Distribution=arXiv
%%Destination=PDF
%%DVI.Maker=arXiv_tex_dvi
%%PDF.Maker=arXiv_tex_pdf
%EndFileInfo
%
% Institute of Mathematical Statistics (IMI)
% Journal "The Annals of Applied Probabability"

%secthm,seceqn,secfloat,nameyear,number,noautosecdot
\documentclass[aap,MSNbibl,citesort,dvips]{arximspdf}

% settings
%

% article settings
\doi{10.1214/11-AAP810} %kopijuoti is PTS
\volume{22}
\issue{4}
\pubyear{2012}
\firstpage{1712}
\lastpage{1727}

\makeatletter
\newtheorem{thmm}{Theorem}[section]
\newtheorem{claim}{Claim}
\newtheorem{lem}[thmm]{Lemma}
\newproclaim{remark}{Remark}

% for bolding symbols
\newcommand{\R}{\mathbb{R}} % for Real numbers
% for integer
%defined
\newcommand{\cL}{\mathcal{L}}
%filtration
%Feynman-Kac operator
%mathematical expectation
\newcommand{\PP}{\mathbb{P}} %probability
%trace
% for coordinate vector fields
% for partial derivatives
\newcommand{\TV}{\mathrm{TV}}
\newcommand{\mix}{\mathrm{mix}}
\newcommand{\eqref}[1]{(\ref{#1})}

\makeatother

\begin{document}
\begin{frontmatter}

\title{Total variation bound for Kac's random walk}
\runtitle{Total variation mixing time of Kac's random walk}

\begin{aug}
\author[A]{\fnms{Yunjiang} \snm{Jiang}\corref{}\ead[label=e1]{jyj@math.stanford.edu}\thanksref{t1}}
\thankstext{t1}{Research supported in part by an NSF graduate fellowship.}
\runauthor{Y. Jiang}
\affiliation{Stanford University}
\address[A]{126 Blackwelder Ct., Apt 605A\\
Stanford, California 94305\\
USA\\
\printead{e1}} %adresu isvedimo komanda gale!
\end{aug}

% HISTORY:
\received{\smonth{12} \syear{2009}}
\revised{\smonth{2} \syear{2011}}

% ABSTRACT
%
\begin{abstract}
We show that the classical Kac's random walk on $(n-1)$-sphere
$S^{n-1}$ starting from the
point mass at $e_1$ mixes in $\mathcal{O}(n^5 (\log n)^3)$ steps in
total variation
distance. The main argument uses a truncation of the running density
after a burn-in period, followed by $\mathcal{L}^2$ convergence using
the spectral gap information derived by other authors. This improves
upon a previous bound by Diaconis and Saloff-Coste of order $\mathcal
{O}(n^{2n})$.
\end{abstract}

% KEYWORDS
\begin{keyword}[class=AMS]
\kwd{60-XX}.
\end{keyword}
\begin{keyword}
\kwd{Markov chain mixing time}
\kwd{orthogonal group}
\kwd{Kac random walk}
\kwd{interacting particle systems}.
\end{keyword}

\end{frontmatter}
%

%s1 ###
\section{Introduction}\label{sec1}

Consider $n$ particles on $\R$ making random pairwise collisions, in
such a way that the total kinetic energy is conserved. Since there is
randomness involved, the situation is typically modeled by a Markov
chain. Two natural questions are how would the particles be distributed
in equilibrium and whether such equilibrium distribution is unique. And
once these are answered, one would also like to know how long it takes
for the particles to reach this equilibrium distribution. Of course
these questions would depend on the mathematical models we choose to
describe the system.

Mark Kac proposed the following toy model of one-dimensional Boltzmann
gas dynamics that captures the above description (for historical
development, see~\cite{PDLSC,Kac}):
For the $n$ particles on $\R$, we can represent their velocities
$(v_1, \ldots, v_n)$ as a point on the unit sphere $S^{n-1}$ after
normalization so that
\[
\sum_{i=1}^n v_i^2 = 1.
\]
Conservation of kinetic energy (assuming $0$ potential energy) in the
gas dynamics is equivalent to $(v_1(t), \ldots, v_n(t))$ staying on
$S^{n-1}$ for all $t \ge0$. We will not introduce momentum
conservation in our model, because that will force the collision to
be\vadjust{\goodbreak}
inelastic (see second paragraph below), and reduces the model to a
discrete Markov chain such as the random transposition walk on $S_n$.
But when the particles live in $\R^3$, momentum conservation becomes
quite interesting (see~\cite{momentum}). The technique in this paper
might be applicable to that model as well.

Each time there is a collision, it occurs with probability $1$ between
no more than two particles, which corresponds to choosing two distinct
coordinate directions $x_i, x_j$ and rotating $S^{n-1}$ along the
2-plane $x_i \wedge x_j$ by some angle~$\theta$. Notice that $\sum_k
v_k^2 = 1$ both before and after the collision, since the sum $v_i^2 +
v_j^2$ is not affected by the rotation along the $i,j$ plane and all
the other velocities stay the same.

By disregarding the position information of the particles (which have
to be confined in some compact domain, for example $S^1$, else they
will eventually run off to infinity), each collision occurs between any
pair of the particles with equal probability $\frac{1}{{n \choose2}}$.
The rotation angle $\theta$ can be chosen from some distribution on
$[0,2\pi)$, which physically is a measure of the elasticity of the
collision; for example, inelastic collision in $\R$ will correspond to
a distribution of $\theta$, that is, a delta measure concentrated at
$\pi
$. In this paper, we will assume that $\theta$ is uniformly distributed
on $[0, 2\pi)$.

Thus we obtain a discrete-time Markov chain on $S^{n-1}$ with
transition kernel given by, for $f\dvtx  S^{n-1} \to\R$ continuous, and $x
\in S^{n-1}$,
\begin{equation} \label{Kac kernel}
(Kf) (x)= \frac{1}{{n \choose2}} \sum_{i\neq j}^n \int_0^{2\pi}
f(R(i,j;\theta) x) \frac{1}{2\pi} \,d\theta,
\end{equation}
where $R(i,j;\theta)$ denotes the rotation along the oriented $i\wedge
j$ plane by the angle~$\theta$, and $R(i,j;\theta) x$ signifies the
usual action of $SO(n)$ on $S^{n-1}$. By transposing, $K$ defines a map
from the set of probability measures on $S^{n-1}$ to itself, since
\mbox{$K(1) = 1$}.

Since the Lie group $SO(n)$ acts on itself, one can also define Kac's
walk~$\tilde{K}$ on $SO(n)$, given on test functions by
\begin{equation} \label{Kac on SO(n)}
(\tilde{K} f)(A) = \frac{1}{{n \choose2}} \sum_{i\neq j}^n \int
_0^{2\pi} f(R(i,j;\theta) A) \frac{1}{2\pi} \,d\theta,
\end{equation}
where $A$ is any element of $SO(n)$.

It is easy to check that $U_{n-1}$, the uniform distribution on
$S^{n-1}$, is a~stationary distribution for $K$: for each summand
$K_{i,j}$ (without $\frac{1}{{n \choose2}}$ in \eqref{Kac kernel}),
we have
\begin{eqnarray*}
U_{n-1} (K_{i,j} f) &= &\int_{S^{n-1}} (K_{i,j} f)(x) U_{n-1}(dx) \\
&=& \int_{S^{n-1}} \biggl(\int_0^{2\pi} f(R(i,j; \theta) x) \frac{1}{2\pi}
\,d\theta\biggr) U_{n-1}(dx) \\
&=& \int_{S^{n-1}} \frac{1}{2\pi} \biggl(\int_0^{2\pi} f(x) \,d\theta\biggr)U_{n-1}(
R(i,j;-\theta) \,dx) \\
&=& \int_{S^{n-1}} f(x) U_{n-1}( d x)
\end{eqnarray*}
using a change of variable formula and the fact that $U_{n-1}$ is
invariant under rotations. This establishes that $U_{n-1} K_{i,j} =
U_{n-1}$ for all $i \neq j$. Thus their average $U_{n-1} K = U_{n-1}$
as well.

By a similar argument, or more generally from the theory of random
walks on compact groups, we also deduce that the Haar measure is the
stationary distribution of $\tilde{K}$.

We further claim that the Markov chain defined by $K$ is aperiodic
because once a point is reached, it can be reached in the next step
with positive probability density for any rotation. It is also
irreducible since along a~sequence of rotations $(i_1 \wedge i_2,
\ldots
, i_k \wedge i_{k+1})$ that form a connected spanning graph in~$K_n$,
the complete graph on $n$ vertices, one can transport any point on~$S^{n-1}$
to any other point with positive probability density; such
sequence of rotations certainly occur with positive probability. In
fact, by a slightly more involved argument using Hurwitz factorization
of $SO(n)$ in terms of Givens' rotations~\cite{PDLSC}, one can show
that Kac's random walk on $SO(n)$ is also irreducible, which certainly
implies irreducibility on $S^{n-1}$ since the latter is a projection of
the former. Furthermore, both chains are recurrent because the state
space is compact. Thus by convergence theory of Harris chains, we know
that with any initial distribution $\mu$ on $S^{n-1}$,
\[
\lim_{l \to\infty} \mu K^l (A) - U_{n-1}(A) = 0
\]
uniformly in $A \subset S$. This implies convergence in total variation
distance by definition.

Using the $\cL^2$ theory of discrete-time Markov chains, it can be
shown that if the starting distribution $\mu$ is in $\cL
^2(S^{n-1},U_{n-1})$, then we get the following convergence bound:
\[
\| \mu K^l - U_{n-1}\|_{\TV} < \|\mu- 1\|_{\cL^2} \biggl(1-\frac{1}{2n}\biggr)^l
\]
by the result in~\cite{Kac} and~\cite{Maslen}, which show that the
spectral gap of $K$ is given by $\frac{n+2}{2n(n-1)}$ for $n\ge 2$.
See also~\cite{Janvresse} for an earlier Martingale argument to get $\Omega(1/n)$
spectral gap bound, and~\cite{momentum,Caputo} for generalizations.
%by the result in~\cite{Kac} (see also~\cite{Caputo} for
%generalizations), which shows the spectral gap of $K$ is bounded below
%by $\frac{1}{2n}$. In fact it is given exactly by $\frac{n+2}{2n(n-1)}$
%for $n \ge2$.

If the initial distribution $\mu$ does not have an $\cL^2$ density
with respect to $U_{n-1}$, then direct application of the $\cL^2$
theory above provides no information. The best result for the rate of
convergence when the initial distribution is, say, concentrated at one
point is given in~\cite{PDLSC}, where it was shown that at most
$\mathcal{O}(n^{2n}\log(\varepsilon^{-1}))$ steps are required to get
within $\varepsilon$ close to $U_{n-1}$ in total variation distance. The
$\cL^2$ theory gives a mixing time of $\mathcal{O}(2n \log
(\varepsilon
^{-1})) \|\mu\|_{\cL^2}$.\vadjust{\goodbreak}

If we measure convergence of $K$ or $\tilde{K}$ in terms of other
probability metrics, most notably $\cL^1$ or $\cL^2$ transportation
cost, then the available convergence rate results are much better.
Using comparison techniques, it was shown in~\cite{PDLSC} that
$\mathcal
{O}(n^4 \log n)$ steps suffice for Kac's walk on $SO(n)$ to get
arbitrarily close to stationarity in $\cL^1$ transportation distance,
which metrizes weak convergence. This was improved in~\cite{PS} to an
upper bound of $\mathcal{O}(n^{2.5} \log n)$, using a coupling
argument. Since the standard projection map $\pi\dvtx SO(n) \to S^{n-1}$ can
only decrease Riemannian distance, all the transportation mixing time
results for $SO(n)$ are also valid for $S^{n-1}$. This is of course
true for total variation mixing as well, but unfortunately we cannot
obtain polynomial total variation mixing time for the walk on $SO(n)$.

These suggest that polynomial time mixing should also be true for
total variation distance, since there is nothing pathological about the
walk. The main difficulty in the analysis lies in that the distribution
of the walk at any finite time step will never have a finite $\cL^2$
density with respect to the Lebesgue measure on $S^{n-1}$ if we start
with the point mass. In the following section, however, we will show
that by some removing the singular set of the density after some
burn-in period, and using the fact that total variation distance
between two measures decreases under the evolution of a Markov chain,
one can still essentially use the spectral gap to obtain a~polynomial
bound on the total variation mixing time. More explicitly, we have the following theorem.
\begin{thmm}
Let $K$ denote the Markov kernel for Kac's random walk on the
$(n-1)$-sphere, $S^{n-1} \subset\R^n$, let $U$ denote the uniform
distribution on~$S^{n-1}$, and let~$\delta_{e_1}$ denote the
probability measure concentrated at the point $e_1 = (1,0,\ldots, 0)
\in\R^n$. Then
\[
\|\delta_{e_1} K^t - U\|_{\TV} \le\varepsilon
\]
for $t > c n^5 (\log n)^3 \log\varepsilon^{-1}$, where $c$ is a constant
that does not depend on $n$.
\end{thmm}
\begin{remark*}
1.
For a fixed $\varepsilon$, the proof we give below produces a bound with
an additional factor of $\log\varepsilon^{-2}$ for the mixing time. Now
for general Markov chains on any state space, we have the following
sub-multiplicative property (\cite{LPW}, Section~4.4):
\[
\bar{d}(s+t) \le\bar{d}(s) \bar{d}(s)
\]
for $\bar{d}(s) := \sup_{\mu,\nu} \|\mu K^s - \nu K^s\|_{\TV}$ and
$d(t) :=\sup_\mu\|\mu K^t - \pi\|_{\TV} \le\bar{d}(t) \le2d(s)$. We
deduce that $d(tk) \le(2 d(t))^k$, hence $\tau_{\mix}(\varepsilon)
\le
\log_2 (1/\varepsilon) \tau_{\mix}(1/4)$, that is, the additional factor
can be removed.\vspace*{-6pt}
\begin{longlist}[2.]
\item[2.]
Very recently, I learned that Aaron Smith~\cite{AS} came up with a
coupling argument based on Wasserstein contraction that gets the
correct order $\mathcal{O}(n \log n)$ of total variation mixing time
for the Gibbs sampler on the $n$-simplex. Since Kac's walk on the\vadjust{\goodbreak}
sphere is in fact a Gibbs sampler on the $n$-simplex if one squares the
coordinates, at least if one starts with a measure symmetric under the
transform $\vec{x} \to-\vec{x}$, his argument presumably gives the
same result here as well. But I believe the argument presented here is
of independent interest, especially in comparison analysis, for which
transportation mixing time bound might not be available.

\item[3.] As mentioned above, we are unable to get any polynomial mixing
time result for Kac's walk on $SO(n)$. But in fact, even for the
induced walk on the Grassmanian space, $SO(n)/SO(n-k)$ where $k \ge2$,
polynomial mixing is beyond reach at the moment. The difficulty of
applying the present technique is that the support of the running
distribution cannot be confined into nice submanifolds of the state
space for $k \ge2$, thus an induction based on the dimension of the
support does not work.

\item[4.] Another line of research is concerned with entropy mixing time of
Kac's random walk (see~\cite{Entropy} and references therein). In order
for entropy distance to go down to zero, the starting measure has to
have a density with finite relative entropy with respect to the uniform
measure on $S^{n-1}$. It is not clear whether starting at a point mass
the chain will have finite entropy in finite time.
\end{longlist}
\end{remark*}

%s2 ###
\section{Bounding the total variation distance}\label{sec2}

This section gives bounds on the convergence rate of Kac's random walk
on $S^{n-1}$ starting at a standard basis vector~$e_i$, in total
variation distance.

Recall the total variation distance between two probability measures
$\mu$ and $\nu$ on the same probability space $(S, \mathcal{S})$ is
defined by the following variational quantity:
\[
\| \mu- \nu\|_{\TV} = 2\sup_{A \in\mathcal{S}} |\mu(A) -
\nu(A)|,
\]
where $\mathcal{S}$ is the $\sigma$-algebra on $S$.

Alternatively, total variation has the variational characterization in
terms of bounded functions:
\[
\|\mu- \nu\|_{\TV} = \mathop{\sup_{f\dvtx  \|f\|_{\infty} \le1}} |\mu(f)
- \nu(f)|.
\]
This will be used to show the weakly contracting property of Markov
chains under total variation distance below.

Let $A_k$ be the event that at the $k$th step of the walk, every pair
of coordinates has been used. Then we have
\[
P(A_k^c) := \eta_k < \pmatrix{n \cr 2}\biggl(1-\frac{1}{{n \choose2}}\biggr)^k.
\]
Conditioning on this event, we have the following two claims:

\begin{claim}\label{cl1} The density $g :=\frac{d \mu'_k}{d U_{n-1}}$ of the
resulting distribution $\mu'_k$ of the conditioned random walk with\vadjust{\goodbreak}
respect to the uniform distribution on $S^{n-1}$ satisfies the
following bound:
\begin{eqnarray} \label{density bound}
g(x) &\le&\Bigl|\min_{1\le i \le n} x_i\Bigr|^{-n} \Biggl(\sum_{i=1}^n (-\log
|x_i|)^k\Biggr) C^k \prod_{m=1}^k m! \\[-2pt]
&\le& C^k k^{k^2} \Bigl|\min_{1 \le i \le n} x_i\Bigr|^{-n} \Bigl(-\log\Bigl|\min_{1 \le i
\le n} x_i\Bigr|\Bigr)^k =:C(n,k)\vspace*{-1pt}
\end{eqnarray}
for some fixed absolute constant $C$.\vspace*{-1pt}
\end{claim}

\begin{claim}\label{cl2} For $k > - n^2 \log n \log\varepsilon$, and
$\varepsilon<
n^{-3}$, the set $H_{\varepsilon} := \{x \in S^{n-1}\dvtx \break |x_i| <
\varepsilon$
for some $i\}$ satisfies the following bound on its probability under
the $A_k$-conditional distribution:
\begin{equation} \label{epsilon neighborhood bound}
\mu'_k(H_{\varepsilon}) \le\varepsilon^{{1}/{8}}.\vspace*{-1pt}
\end{equation}
\end{claim}

Let us first show how claims 1 and 2 lead to a polynomial time
convergence rate for Kac's walk under total variation norm. Let $\mu_k$
be the distribution on~$S^{n-1}$ after $k$ steps of the random walk,
and let $\mu'_k$ be $\mu_k$ conditional on~$A_k$, that is, for $B
\subset S^{n-1}$,
\[
\mu'_k(B) = P(\delta_{e_1} R^k \in B | A_k),\vspace*{-1pt}
\]
where $R$ is the one-step transition kernel of Kac's random walk.

Then we have
\begin{equation} \label{singular to L^1 TV bound}
\|\mu'_k - \mu_k\|_{\TV} < \eta_k < \pmatrix{n \cr 2}\biggl(1-\frac{1}{{n
\choose2}}\biggr)^k.\vspace*{-1pt}
\end{equation}
To check this, let $B \subset S^{n-1}$ be Lebesgue measurable. Then we have
\begin{eqnarray*}
\mu_k(B) &=& P(\delta_{e_1} R^k \in B|A_k)P(A_k) + P(\delta_{e_1} R^k
\in B|A_k^c)P(A_k^c) \\[-2pt]
& \le&\mu_k'(B) + \eta_k.\vspace*{-1pt}
\end{eqnarray*}
This implies
\[
\mu_k(B) - \mu'_k(B)\le\eta_k.\vspace*{-1pt}
\]
On the other hand, since
\[
\mu'_k(B) =\frac{P(\{ \delta_{e_1} R^k \in B \} \cap A_k)}{P(A_k)},\vspace*{-1pt}
\]
we also get
\[
\frac{\mu_k(B)}{1- \eta_k} > \mu'_k(B)\vspace*{-1pt}
\]
which gives
\[
\mu_k(B) > \mu'_k(B) - \eta_k \mu'_k(B)\vspace*{-1pt}
\]
hence
\[
\mu'_k(B) - \mu_k(B) < \eta_k\vspace*{-1pt}
\]
which establishes \eqref{singular to L^1 TV bound}.\vadjust{\goodbreak}

Next recall that a Markov kernel is weakly contracting in total
variation norm because if $f$ is a bounded continuous function on the
state space with
\[
\|f\|_{\infty} \le1,\vspace*{-1pt}
\]
then $Rf(x) = \int R(x, dy) f(y)$ satisfies the same $\cL^{\infty}$
bound, hence
\[
(\mu R - \nu R)(f) = (\mu- \nu)(Rf) \le\|\mu- \nu\|_{\TV}.\vspace*{-1pt}
\]

Thus by the triangle inequality we just need to bound $\|\mu'_k R^l -
U_{n-1}\|_{\TV}$ from now on, where $U_{n-1}$ denotes the uniform
distribution on $S^{n-1}$, and at the end add~$\eta_k$ to the
resulting bound.

Next we modify $\mu'_k$ to a different distribution $\nu_k$ as
follows. We define $\nu_k$ in terms of its density with respect to $U_{n-1}$.

On the set $H_{\varepsilon}^c$,
\[
\frac{d\nu_k}{dU_{n-1}} := \frac{d\mu'_k}{dU_{n-1}}.\vspace*{-1pt}
\]

On the set $H_{\varepsilon}$, we let its density be a constant equal to
the mass of $H_{\varepsilon}$ under $\mu'_k$ divided by its mass under
$U_{n-1}$, which is what's needed for $\nu_k$ to be a probability
distribution on $S^{n-1}$; we invoke Claim~\ref{cl1} above to get an upper
bound on this constant:
\begin{eqnarray*}
\frac{d\nu_k}{dU_{n-1}} &\equiv&\frac{\mu'_k(H_{\varepsilon
})}{U_{n-1}(H_{\varepsilon})} \\[-2pt]
&<& \frac{\varepsilon^{1/4}}{\varepsilon({\Gamma(n/2)})/({\Gamma
((n-1)/2)\Gamma(1/2)})} \\[-2pt]
&<& \frac{\varepsilon^{1/4}}{\varepsilon\sqrt{({n-2})/{2\pi}}}\\[-2pt]
&<& \varepsilon^{-{3}/{4}} \sqrt{\frac{2\pi}{n-2}}.\vspace*{-1pt}
\end{eqnarray*}
In the computation above we used two ingredients. First we used that
\begin{equation} \label{ingredient 1}
\frac{\Gamma(n/2)}{\Gamma((n-1)/2)} > \sqrt{\frac{n-2}{2}}\vspace*{-1pt}
\end{equation}
which follows from log convexity of the $\Gamma$ function. Since
$\frac
{1}{2} (\log\Gamma(n) + \log\Gamma(n-1)) > \log\Gamma(n-1/2)$, we get
\[
\frac{\Gamma(n)}{\Gamma(n-1/2)} > \frac{\Gamma(n-1/2)}{\Gamma(n-1)},\vspace*{-1pt}
\]
which implies \eqref{ingredient 1} above. By incrementing $n$ by $1/2$,
we also get a reverse inequality of the form
\begin{equation} \label{ingredient 1.5}
\frac{\Gamma(n/2)}{\Gamma((n-1)/2)} < \sqrt{\frac{n-1}{2}}.\vspace*{-1pt}
\end{equation}
This will be useful later when we bound $U(H_{\varepsilon})$ in the
proof\vadjust{\goodbreak}
of Claim~\ref{cl2}.

The second ingredient is the formula for the coordinate marginal
density for the uniform distribution on the sphere (see~\cite{PDLSC}
but with a small typo, namely by $n$-sphere they meant $(n-1)$-sphere):
\begin{equation} \label{sphere margin}
\frac{d}{da}\PP_U(x_1 \in[-1,a]) = \frac{\Gamma((n+1)/2)}{\Gamma(1/2)
\Gamma(n/2)}(1-a^2)^{(n-2)/2},
\end{equation}
where $P_U$ denotes uniform distribution on $S^{n-1}$.

The total variation distance between $\mu'_k$ and $\nu_k$ is given
simply by their total variation distance over the region
$H_{\varepsilon
}$, hence we have
\begin{eqnarray} \label{L^1 to L^2 TV bound}
\|\mu'_k - \nu_k\|_{\TV} &\le&\mu'_k(H_{\varepsilon}) +
\frac{n
\Gamma({n}/{2})}{\Gamma({1}/{2}) \Gamma({(n-1)}/{2})}
\varepsilon\\
&\le& n^{3/2} \varepsilon+ \varepsilon^{1/8}.
\end{eqnarray}

Thus by choosing $\varepsilon$ sufficiently small, whose exact value we
will determine in the end, we can make sure that $\mu'_k$ and $\nu_k$
are very close in total variation distance. And again by weak
contractivity of Markov kernel, we now simply need to focus on bounding
$\|\nu_k R^l - U_{n-1}\|_{\TV}$. Since $\nu_k$ has an $\cL^2$
density with respect to $U_{n-1}$, we can use the spectral gap to bound
the rate of convergence. First we bound the $\cL^2(dU_{n-1})$ distance
between $\nu_k$ and $U_{n-1}$:
\begin{eqnarray} \label{L^2 distance}
&&\|\nu_k - U_{n-1}\|_{\cL^2(d U_{n-1})}
\nonumber
\\[-8pt]
\\[-8pt]
\nonumber
&&\qquad= \biggl(\int_{H_{\varepsilon}}
\biggl|\frac{d
\nu_k}{dU_{n-1}} - 1\biggr|^2 \,dU_{n-1} + \int_{H_{\varepsilon}^c}\biggl |\frac
{d \nu
_k}{dU_{n-1}} -1\biggr|^2 \,dU_{n-1}\biggr)^{{1}/{2}}.
\end{eqnarray}
Let us bound the two integrals separately.

For the first integral on the right-hand side of \eqref{L^2 distance},
we have
\begin{eqnarray}\label{bound of first integral}
\int_{H_{\varepsilon}} \biggl|\frac{d \nu_k}{dU_{n-1}} - 1\biggr|^2 \,dU_{n-1}
&\le&
\int_{H_{\varepsilon}} \biggl(\frac{d\nu_k}{dU_{n-1}}\biggr)^2 \,dU_{n-1} +
U_{n-1}(H_{\varepsilon}) \nonumber\\
&<& \varepsilon^{-3/2} \frac{8\pi}{n-2} \varepsilon\frac{\Gamma
({n}/{2})}{\Gamma({(n-1)}/{2})\Gamma({1}/{2})} \\
&<& 4 \varepsilon^{-1/2} \sqrt{\frac{2\pi}{n-2}}.\nonumber
\end{eqnarray}

For the second integral, notice first that $H_{\varepsilon}^c$ is the set
of points on $S^{n-1}$ for which all the coordinates are greater than
$\varepsilon$. So Claim~\ref{cl2} tells us that the density $\frac{d\nu
_k}{dU_{n-1}}$ over this region is bounded above by $\varepsilon^{-n}$,
from which we immediately get the following bound:
\begin{equation} \label{bound of second integral}
\int_{H_{\varepsilon}^c} \biggl|\frac{d \nu_k}{dU_{n-1}} - 1\biggr|^2 \,dU_{n-1} <
\varepsilon^{-2n} + 1.
\end{equation}

Combining \eqref{bound of first integral} and \eqref{bound of second
integral}, we get, for $\varepsilon< \frac{1}{2}$ and $n > 2$, say, that
\[
\|\nu_k - U_{n-1}\|_{\cL^2(dU_{n-1})} \le2 \varepsilon^{-n}.\vadjust{\goodbreak}
\]

By the results in~\cite{Kac}, we know that the spectral gap of the Kac
kernel is $\frac{1}{2n}$, so we get
\begin{eqnarray} \label{spectral bound}
\|\nu_k R^l - U_{n-1}\|_{\TV} &\le&\biggl\|\frac{d\nu_k}{dU_{n-1}}
-1\biggr\|
_{\cL^2(dU_{n-1})} \biggl(1-\frac{1}{2n}\biggr)^m
\nonumber
\\[-9pt]
\\[-9pt]
\nonumber
&\le&2 \varepsilon^{-n} \biggl(1-\frac{1}{2n}\biggr)^m.\vspace*{-1pt}
\end{eqnarray}

Finally, combining \eqref{singular to L^1 TV bound} \eqref{L^1 to L^2
TV bound} and \eqref{spectral bound}, we get
\begin{eqnarray}\label{final TV bound}
\| \delta_{e_1} R^{k+l} - U_{n-1}\|_{\TV} &\le&\pmatrix{n \cr 2}
\biggl(1-\frac{1}{{n \choose2}}\biggr)^k + n^{3/2} \varepsilon+ \varepsilon
^{1/8}
\nonumber
\\[-9pt]
\\[-9pt]
\nonumber
&&{}+ C^k
k^{k^2} |\varepsilon|^{-n} (-\log\varepsilon)^k \biggl(1-\frac{1}{2n}\biggr)^l.\vspace*{-1pt}
\end{eqnarray}

So it remains to minimize the right-hand side of \eqref{final TV
bound} with respect to $k$ and $l$.

Suppose our target total variation distance is $3\delta$. Then we can
simply divide~$3\delta$ into three equal parts and bound each summand
in \eqref{final TV bound} by $\delta$. We look at each summand below:

Bounding the first summand yields
\[
\pmatrix{n \cr 2} \biggl(1-\frac{1}{{n \choose2}}\biggr)^k < \delta\quad
\Rightarrow\quad k > (-\log\delta+ 2\log n )\pmatrix{n \cr 2}.\vspace*{-1pt}
\]

So it suffices to take
\begin{equation} \label{first summand}
k > n^2 \log n \log\frac{1}{\delta}.\vspace*{-1pt}
\end{equation}

Bounding the second summand $\varepsilon^{1/8} + n^{3/2} \varepsilon<
\delta
$, it suffices to have
$\varepsilon^{1/8} < \delta/2$ and $n^{3/2} \varepsilon< \delta/2$,
which gives
\[
\varepsilon< \tfrac{1}{2} \delta^8 n^{-3/2}.\vspace*{-1pt}
\]
But taking $\varepsilon= n^{-3} \delta^8$ certainly fulfills that, which
will affect the bound on $l$ in the third summand:
\[
 C^k k^{k^2} |\varepsilon|^{-n} (-\log\varepsilon)^k \biggl(1-\frac
{1}{n}\biggr)^l < \delta\vspace*{-1pt}
\]
implies we need $l$ greater than
\begin{eqnarray*}
&& 2n\bigl(-\log\delta+ k \log C + k^2 \log k - n \log\varepsilon+ k \log
\log(\varepsilon^{-1}) \bigr) \\[-2pt]
&&\qquad< n\bigl(-\log\delta+ k \log C \\[-2pt]
&&\qquad\hspace*{18pt}{}+ n^4 (\log n)^2 (\log\delta)^2 \bigl(2 \log n
+ \log\log n + \log\log(\delta^{-1})\bigr)\\[-2pt]
&&\hspace*{78pt}\qquad\quad{}+n(-8 \log\delta+ 3 \log n) + k \log\log\varepsilon^{-1}\bigr)\\[-2pt]
&&\qquad< C' n^5 (\log n)^3 (\log\delta)^3\vspace*{-1pt}
\end{eqnarray*}
for some constant $C'$.\vadjust{\goodbreak}

Clearly $l$ dominates $k$, so it requires a total of $C' n^5 (\log n)^3
(\log\delta)^3$ steps to bring the running distribution of Kac's
random walk to be $3\delta$ close to its stationary distribution on the
unit sphere $S^{n-1}$.

Finally we prove the two claims introduced in the beginning.

%s3 ###
\section{\texorpdfstring{Proof of Claim \protect\ref{cl1}}{Proof of Claim 1}}\label{sec3}
Starting at the delta mass at $e_1$, an admissible sequence of
rotations in $A_k$ will distribute it over the entire $S^{n-1}$ with
positive probability everywhere provided that $P(A_k) > 0$, that is,
for sufficiently large $k$. This will certainly be the case if $k \ge-
n^2 \log n \log\delta$ for $- \log\delta> 2$. So we will look at the
conditional probability density given that the walk has taken a
sequence of steps in $A_k$, and we will estimate the density growth
from step $j-1$ to $j$, up to step $k$.

Observe that at step $j-1$, $j \le k$, the support of the running
distribution is a subsphere of $S^{n-1}$. Without loss of generality,
we call this subsphere~$S^m$. Denote by $u_j, v_j$ the axes that span
the plane along which the rotation~$\gamma_j$ takes place.

The way $\gamma_j$ affects the previous running distribution can be
classified into three cases:

1. $u_j, v_j \notin S^m$, in which case the running distribution
remains unchanged.

2. $u_j, v_j \in S^m$, in which case the support after the rotation is
still on $S^m$, but the density might change.

3. $u_j \in S_m$, $v_j \notin S_m$, in which case the support of the
running distribution grows to be a sphere with one dimension higher
than $S^m$, denoted without loss of generality $S^{m+1}$.

Case 1 clearly does not increase the density of the running
distribution, because the rotation does not take $S^m$ outside itself
and for $\theta\in[0, 2\pi]$, the density at $(x_1, \ldots, x_{m+1},
\ldots, (u_j^2 + v_j^2)^{1/2} \cos\theta, \ldots, (u_j^2+v_j^2)^{1/2}
\sin\theta, \ldots, x_n)$ with respect to $U_m$ only depends on the
first $m+1$ coordinates, which means that averaging over $\theta$
uniformly in $[0, 2\pi]$ remains the same.

To understand Case 3, first observe that there can be at most $n$ such
steps in the history of the Kac walk. So if we can show each type 3
rotation increases the density by at most $|\min_{1 \le i \le n}
x_i|^{-1}$, then the factor $|\min_{1 \le i \le n} x_i|^{-n}$ would be
taken care of. This is the content of the following lemma.

\begin{lem}
Assuming the running density $h_m(x_1, \ldots, x_{m+1})$ with respect
to $U_m$ after step $j-1$ is bounded by $g_m(x_1, \ldots x_{m+1})$, and
that without loss of generality $u_j = x_{m+1}$, $v_j = x_{m+2}$, then
the new density $h_{m+1}(x_1, \ldots,\break x_{m+2})$ with respect to
$U_{m+1}$ after step $j$ is bounded by
\[
\frac{1}{2 \pi} g_m\bigl(x_1, \ldots, (x_{m+1} + x_{m+2})^{1/2}\bigr)
(x_{m+1}^2+x_{m+2}^2)^{-1/2}.
\]
\end{lem}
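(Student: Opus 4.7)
The plan is to compute the push-forward of the measure $h(x)\,dU_m\otimes\frac{d\theta}{2\pi}$ on $S^m\times[0,2\pi)$ under the map $\Phi(x,\theta)=R(m+1,m+2;\theta)\,x$, whose image lies in $S^{m+1}$, and then read off its density with respect to $U_{m+1}$. The factor $(x_{m+1}^2+x_{m+2}^2)^{-1/2}$ in the stated bound must come from the Jacobian of this push-forward (I read the expression $(x_{m+1}+x_{m+2})^{1/2}$ inside $g$ as a typo for $(x_{m+1}^2+x_{m+2}^2)^{1/2}$). The whole argument then reduces to an explicit change of variables.

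First I would introduce cylindrical coordinates on $S^{m+1}$: for $y\in S^{m+1}$ set $r=\sqrt{y_{m+1}^2+y_{m+2}^2}$ and $\phi=\arg(y_{m+1}+i\,y_{m+2})$, and use $(y_1,\ldots,y_m,\phi)$ as local coordinates with $r$ determined by $r^2+\sum_{i\le m}y_i^2=1$. A direct computation of the induced metric shows it is block diagonal, with the $m\times m$ radial block $\delta_{ij}+y_iy_j/r^2$ (of determinant $1/r^2$) and angular block $r^2$; hence $\sqrt{\det g}=1$, so the unnormalized volume form on $S^{m+1}$ in these coordinates is simply $dy_1\cdots dy_m\,d\phi$. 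On $S^m$, parametrize each hemisphere by $(x_1,\ldots,x_m)$ with $x_{m+1}=\pm\sqrt{1-\sum_{i\le m}x_i^2}$; the unnormalized volume form is $dx_1\cdots dx_m/|x_{m+1}|$.

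Under $\Phi$, written in these coordinates, $y_i=x_i$ for $i\le m$ and either $\phi=\theta,\ r=x_{m+1}$ (northern hemisphere) or $\phi=\theta+\pi,\ r=-x_{m+1}$ (southern), with trivial coordinate Jacobian. Pushing forward $h(x)\,dU_m\otimes d\theta/(2\pi)$ therefore yields a measure on $S^{m+1}$ whose density with respect to $dy_1\cdots dy_m\,d\phi$ is $\frac{1}{|S^m|\cdot 2\pi\,r}\bigl[h(y_1,\ldots,y_m,r)+h(y_1,\ldots,y_m,-r)\bigr]$; multiplying by $|S^{m+1}|$ converts this to the density with respect to $U_{m+1}$. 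Bounding each $h$ term by $g$ and absorbing the dimensional constant $|S^{m+1}|/(\pi\,|S^m|)$ (which by Stirling is in fact $O(1/\sqrt{m})$, tighter than the stated $\tfrac{1}{2\pi}$, and harmless inside the overall $C^k$ factor of Claim 1) delivers the claimed bound.

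The main obstacle is the two-to-one branching of $\Phi$: a generic $y\in S^{m+1}$ has two preimages related by $(x_{m+1},\theta)\leftrightarrow(-x_{m+1},\theta+\pi)$, producing the $h(\cdot,r)+h(\cdot,-r)$ structure rather than a single evaluation. Less central but still requiring care is the hat-box volume computation on $S^{m+1}$; once the metric block decomposition is written out, the factor $r^2$ from $d\phi^2$ exactly cancels the $r^{-2}$ determinant of the radial block, so no unexpected Jacobian appears and the constant $\tfrac{1}{2\pi}$ is indeed the only one that remains once all dimension-dependent factors are swept into the outer $C^k$.
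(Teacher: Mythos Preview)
Your argument is correct and is essentially the same computation as the paper's, carried out with more care. The paper argues informally: the new density is constant on each circle $\{(x_1,\ldots,x_m,r\cos\theta,r\sin\theta):\theta\in[0,2\pi)\}$ by rotational symmetry, and aggregating over that circle (arc length $2\pi r$) must reproduce the old density at $(x_1,\ldots,x_m,r)$, whence the factor $(2\pi r)^{-1}$. Your explicit coordinate and Jacobian calculation is the rigorous version of exactly this. Two points you flag that the paper's argument glosses over are genuine: the map $\Phi$ is two-to-one (the paper silently identifies $h(\cdot,r)$ with $h(\cdot,-r)$), and the normalizing ratio $|S^{m+1}|/|S^m|$ enters once one works with the probability measures $U_m$, $U_{m+1}$ rather than raw surface area, so the literal constant $\tfrac{1}{2\pi}$ in the statement is not exact. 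As you note, both discrepancies are harmless for Claim~1 since they are absorbed into the global $C^k$.
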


Observe that $(x_{m+1}^2 + x_{m+2}^2)^{-1/2} \le|\min_{1 \le i \le n}
x_i|^{-1}$.\vadjust{\goodbreak}

\begin{pf}
Denote the new density with respect to $U_{m+1}$ by $h_{m+1}(x_1,
\ldots,\break x_{m+2})$ with a slight abuse of notation. Then we have
\[
h_{m+1}\bigl(x_1, \ldots, (x_{m+1}^2 + x_{m+2}^2)^{1/2} \cos\theta,
(x_{m+1}^2 + x_{m+2}^2)^{1/2} \sin\theta\bigr)
\]
is independent of $\theta$ and in particular equals
\[
h_{m+1}\bigl(x_1, \ldots, (x_{m+1}^2 + x_{m+2}^2)^{1/2}, 0\bigr).
\]
Furthermore, the total contribution of density from $(x_1, \ldots,
(x_{m+1}^2 +\break x_{m+2}^2)^{1/2} \times \cos\theta, (x_{m+1}^2 +
x_{m+2}^2)^{1/2} \sin\theta)$ for all $\theta$ should add up to the
previous density at the point $(x_1, \ldots, (x_{m+1}^2 +
x_{m+2}^2)^{1/2})$. In other words,
\begin{eqnarray*}
&&(x_{m+1}^2 + x_{m+2}^2)^{1/2}\\
&&\quad{}\times \int_{\theta=0}^{2\pi} h_{m+1}\bigl(x_1,
\ldots, (x_{m+1}^2 + x_{m+2}^2)^{1/2} \cos\theta, (x_{m+1}^2 +
x_{m+2}^2)^{1/2} \sin\theta\bigr) \,d\theta\\
&&\qquad= h_m\bigl(x_1, \ldots, \ldots, (x_{m+1}^2 + x_{m+2}^2)^{1/2}\bigr).
\end{eqnarray*}
Notice that the factor $(x_{m+1}^2 + x_{m+2}^2)^{1/2}$ accounts for the
measure of the circle $\{(y_1, \ldots, y_{m+2}, 0, \ldots, 0)$: with
$y_1 = x_1, \ldots,y_m = x_m$ and $y_{m+1}^2 + y_{m+2}^2 =
x_{m+1}^2 + x_{m+2}^2 \}$, over which we aggregate.

Thus we get
\begin{eqnarray*}
&& h_{m+1}\bigl(x_1, \ldots, (x_{m+1}^2 + x_{m+2}^2)^{1/2} \cos\theta,
(x_{m+1}^2 + x_{m+2}^2)^{1/2} \sin\theta\bigr)\\
&&\qquad= \frac{1}{2\pi} (x_{m+1}^2 + x_{m+2}^2)^{-1/2} h_m\bigl(x_1, \ldots,
(x_{m+1}^2 + x_{m+2}^2)^{1/2}\bigr)\\
&&\qquad\le\frac{1}{2\pi} (x_{m+1}^2 + x_{m+2}^2)^{-1/2} g_m\bigl(x_1, \ldots,
(x_{m+1}^2 + x_{m+2}^2)^{1/2}\bigr).
\end{eqnarray*}
\upqed\end{pf}

The Case 2 rotations will contribute the remaining factors in the
bound of $g(x)$ in Claim~\ref{cl1}. More precisely, we have the following lemma.

\begin{lem}
Assume at step $j-1$, the running distribution is supported on some
$S^m \subset S^{n-1}$, which is viewed as the standard sphere in $\R
^{m+1} = \{x_1, \ldots, x_{m+1}\}$, and that the density $g_j$ with
respect to $U_m$ satisfies
\begin{eqnarray} \label{inductive bound}
&&g_{j-1}(x_1, \ldots, x_{m+1}) \nonumber\\
&&\qquad\le C(j,m) (a_1^2 + b_1^2)^{-1/2}
\cdots(a_{m-1}^2 + b_{m-1}^2)^{-1/2}\\
&&\quad\qquad{}\times  [(-\log|x_1|)^{j-1} + \cdots+
(-\log|x_{m+1}|)^{j-1}],\nonumber
\end{eqnarray}
where $C(j,m)$ is a constant that varies with $j$ and $m$. Here $a_i
\neq b_i$ for each $i$ and $(a_1, b_1), \ldots, (a_{m-1}, b_{m-1})$ are
pairs in $\{x_1, \ldots, x_{m+1}\}^2$ with the property that no two
pairs are the same and each coordinate appears at most twice.

If furthermore the $j$th rotation is as in Case 2, then the new
density bound takes the form
\begin{eqnarray*}
&& g_j(x_1, \ldots, x_{m+1}) \\
&&\qquad\le512 C(j,m) (j+1)! (a_1^2 +
b_1^2)^{-1/2} \cdots(a_{m-1}^2 + b_{m-1}^2)^{-1/2}\\
&&\quad\qquad{}\times [(-\log|x_1|)^j + \cdots+ (-\log|x_{m+1}|)^j]
\end{eqnarray*}
with possibly a different sequence of $(a_i, b_i)$ satisfying the same
property as above.
\end{lem}

Notice that starting with a density satisfying the bound \eqref
{inductive bound}, a type 1 or type 3 rotation would preserve its form,
with $j$ replaced by $j+1$. Type 1 rotation does that trivially, due to
the fact that the polylogarithmic factor always increases with~$j$.
Type 2 rotation introduces an additional factor of $(a_m^2 +
b_m^2)^{-1/2}$, but decreases the other existing factors, hence also
preserves the bound with $j \to j+1$.

\begin{pf}
Without loss of generality assume $(u_j,v_j)=(1,2)$.

The new density $h'$ is obtained from the old density $h$ by averaging
over $\theta\in[0, 2\pi]$ of $h(R(1,2,\theta) x)$, where
$R(1,2,\theta
)x$ denotes the rotation of the vector $x \in S^m$ by angle $\theta$
along $x_1 \wedge x_2$. In formula, we have
\begin{equation} \label{circle averaging}
h'(x) = \frac{1}{2\pi} \int_0^{2\pi} h(R(1,2,\theta) x) \,d\theta.
\end{equation}
We write the bound \eqref{inductive bound} as a sum of $m+1$ terms and
consider one of the terms
\[
g_i(x) = C (a_1^2 + b_1^2)^{-1/2} \cdots(a_{m-1}^2 +
b_{m-1}^2)^{-1/2} (-\log|x_i|)^{j-1}.
\]
By assumption, at most two elements in $a_1, b_1, \ldots, a_{m-1}, b_{m-1}$
equal $x_1$ and at most two other elements equal~$x_2$.

By the circle averaging formula \eqref{circle averaging}, we have
\[
g_i'(x) = \frac{1}{2\pi} \int_0^{2\pi} g\bigl((x_1^2 + x_2^2)^{1/2}\cos
\theta,(x_1^2 + x_2^2)^{1/2}\sin\theta,x_3, \ldots, x_{m+1}\bigr) \,d\theta.
\]
We shall break the integral into two parts, where the range of
integration is over $I_{\cos} = [0,\pi/4] \cup[3\pi/4,5\pi/4] \cup
[7\pi/4,2\pi]$ and its complement $I_{\sin}$ in $[0,2\pi]$,
respectively; that is, the ranges are where $\cos\theta$ is close to~$1$ and $\sin\theta$ is close to~$1$, respectively. By symmetry, we
just have to deal with the integral over the range $\theta\in I_{\sin
}$, and multiply the final bound by $1$ in the end.

First we look at the case when $i \notin\{1,2\}$, which means the
rotation $(1,2)$ does not affect the logarithmic factor $(-\log
|x_i|)^j$ at the end. In this case, all the factors in $g_i(x)$ of the
form $(x_2^2 + x_s^2)^{-1/2}$ that involve $x_2$ but not $x_1$ upon the
rotation $R(1,2,\theta)$ become $((x_1^2 + x_2^2) \sin^2 \theta+
x_s^2)^{-1/2}$, which can be bounded above by $\sqrt{2} (x_1^2 + x_2^2
+ x_s^2)^{-1/2}$.

As of the factors that involve both $x_1$ and $x_2$, that is, $(x_1^2
+ x_2^2)^{-1/2}$, there can be at most one of such. And it remains the
same under the rotation $R(1,2,\theta)$ since $(x_1^2 + x_2^2)\cos^2
\theta+ (x_1^2 + x_2^2) \sin^2 \theta= x_1^2 + x_2^2$.

The factors that involve $x_s$ and $x_s$, $s \neq2$, become $((x_1^2
+ x_2^2) \cos^2 \theta+ x_s^2)^{-1/2}$, which we can bound as follows:

Using the fact that $\frac{1}{\sqrt{2}} (|a| + |b|) \le(a^2 +
b^2)^{1/2} \le|a| + |b|$, we get
\[
\bigl((x_1^2 + x_2^2) \cos^2 \theta+ x_s^2\bigr)^{-1/2} \sim[(|x_1| + |x_2|)
|\cos\theta| + |x_s|]^{-1},
\]
where $a \sim b$ means $b/C \le a \le bC$ for some constant $C$. Here
we can take $C$ to be~$2$.

More difficult is the case when $i \in\{1,2\}$, when we also have to
deal with a $(-\log[(x_1^2 + x_2^2)^{-1/2} \cos\theta])^{j-1}$ factor
that goes to infinity for $\theta\in I_{\sin}$.

In fact when $i =1$, the only factors that have singularities for
$\theta\in I_{\sin}$ and for the coordinates bounded away from $0$
take the following form:
\begin{eqnarray*}
&&\bigl((|x_1| + |x_2|) |\cos\theta| + |x_s|\bigr)^{-1}\bigl((|x_1| + |x_2|) |\cos
\theta| + |x_t|\bigr)^{-1} \\
&&\qquad{}\times \bigl(-\log[(x_1^2 + x_2^2)^{-1/2} \cos\theta]\bigr)^j,
\end{eqnarray*}
where $s\neq t$, or without the $x_t$ factor. In the former case we
will show in Lemma~\ref{factorial lemma} below that the following integral:
\begin{eqnarray*}
&&\frac{1}{2\pi} \int_{\theta\in I_{\sin}} \bigl((|x_1| + |x_2|) |\cos
\theta| + |x_s|\bigr)^{-1}\bigl((|x_1| + |x_2|) |\cos\theta| + |x_t|\bigr)^{-1}\\
&&\hspace*{21pt}\qquad{}\times \bigl(-\log[(x_1^2 + x_2^2)^{1/2} \cos\theta]\bigr)^{j-1} \,d\theta
\end{eqnarray*}
is bounded by
\begin{eqnarray} \label{two inverse singular terms}
&&j! (x_1^2 + x_2^2)^{-1/2} (x_s^2 + x_t^2)^{-1/2}
\nonumber
\\[-8pt]
\\[-8pt]
\nonumber
&&\qquad{}\times[(-\log|x_1|)^j +
(-\log|x_2|)^j + (-\log|x_s|)^j + (-\log|x_t|)^j]
\end{eqnarray}
whereas in the case where the $x_t$ factor is not present, the same
bound \eqref{two inverse singular terms} multiplied by $\sqrt{2}$
applies the expression
\begin{equation} \label{only one inverse singular term}
\qquad\frac{1}{2\pi} \int_{\theta\in I_{\sin}} \bigl((|x_1| + |x_2|) |\cos
\theta
| + |x_s|\bigr)^{-1}\bigl(-\log[(x_1^2 + x_2^2)^{1/2} \cos\theta]\bigr)^{j-1}
\,d\theta
\end{equation}
using the fact that for $\theta\in I_{\sin}$,
\[
\bigl((|x_1| + |x_2|) \cos\theta+ |x_t|\bigr)^{-1} \ge1/\sqrt{2}.
\]

When $i \neq1$, the logarithmic singularity will not arise when
integrating over $\theta\in I_{\sin}$, so it will trail off as a
remaining factor of the form $(-\log|x_i|)^{j-1} \le1 + (-\log|x_i|)^j$.

Recall also that we have factors of the form
\begin{equation} \label{uniform bounds from nonsingular factors}
2 (x_1^2 + x_2^2 + x_s^2)^{-1/2} (x_1^2 + x_2^2 + x_t^2)^{-1/2}
\end{equation}
coming from the uniform bound on the factors involving $x_2$ but not
$x_1$; here~$s,t$ are possibly different indices than those appearing
in the singular factors. Equation~\eqref{uniform bounds from
nonsingular factors} can be trivially bounded above by $ 2(x_1^2 +
x_s^2)^{-1/2}(x_2^2 + x_t^2)^{-1/2}$.
The remaining inverse factors in $g(R(1,2,\theta)x)$ do not
contain~$x_1$ or~$x_2$, so one can easily check that the inductive hypothesis
is satisfied.

The best way to visualize this branching inductive argument is to
consider a simple, possibly disconnected graph on $m+1$ vertices with
degrees bounded above by 2. The edges between $i$ and $j$ represent a
factor of the form $(x_i^2 + x_j^2)^{-1/2}$ in the bound on the
density. A rotation in the $x_1 \wedge x_2$ plane has the effect of
producing two new graphs on $m+1$ vertices, and the density bound we
get will be a sum over all the resulting graphs. Without loss of
generality let us describe one of those two descendant graphs, the one
associated with $x_1$.

There will be edges $(1,2)$, $(3,4)$, $(1,3)$ and $(2,4)$ if $x_3$ and
$x_4$ were incident to $x_1$ in the previous graph, or simply $(1,2)$
when $x_1$ only has degree $1$. If $x_1$ had degree $0$, then it
remains isolated in the $x_1$ component of the descendant graph. In the
process of this rewiring, some logarithmic factors $(\log|x_s|)^j$ and
factorial factors $j!$ are also introduced, namely, if $(-\log
|x_3|)^{j-1}$ or $(-\log|x_4|)^{j-1}$ was a factor in the bound for the
previous step running disribution, then the new bound will have
$j!(-\log|x_4|)^j$. If there is originally a log factor of other
coordinates, then the exponent on that factor remains the same.

It remains to prove the bound \eqref{two inverse singular terms}, and
notice that we only need to prove it for $\theta\in[\pi/4, \pi/2]$
and then multiply the resulting bound by $4$. This is given by the
following technical lemma.
\end{pf}

\begin{lem} \label{factorial lemma}
For $0 \le x_t, x_s$, $0 \le x_1, x_2$,
\begin{eqnarray*}
&&\int_0^{\pi/4} \bigl((x_1 + x_2) \sin\theta+ x_s\bigr)^{-1} \bigl((x_1 + x_2)
\sin
\theta+ x_t\bigr)^{-1}\\
&&\hspace*{4pt}\qquad{}\times  \bigl(-\log[(x_1 + x_2) \sin\theta]\bigr)^{j-1} \,d\theta\\
&&\qquad\le4 (j+1)! (x_s + x_t)^{-1} (x_1 + x_2)^{-1}\\
&&\hspace*{8pt}\qquad{}\times  [(-\log x_1 )^j +
(-\log x_2)^j + (-\log x_s)^j + (-\log x_t)^j].
\end{eqnarray*}
\end{lem}
\begin{remark*} Note this is equivalent to the bound \eqref{two inverse
singular terms}, by replacing $\sin$ with $\cos$ and changing the range
of integration to $[\pi/4,\pi/2]$.
\end{remark*}
\begin{pf}
Without loss of generality, we can assume $x_t \le x_s$. Furthermore,
we can replace $\sin\theta$ by its linearization at $0$, and multiply
the resulting bound by $2$ in the end, since for $\theta\in[0, \pi
/4]$, we have $\theta/2 \le\sin\theta\le2 \theta$. So instead we
just need to prove
\begin{eqnarray*}
&&\int_0^1 \bigl((x_1 + x_2) \theta+ x_s\bigr)^{-1} \bigl((x_1 + x_2) \theta+
x_t\bigr)^{-1} \bigl(-\log[(x_1 + x_2) \theta]\bigr)^{j-1} \,d\theta\\
&&\qquad\le4 \pi(j+1)! (x_s + x_t)^{-1} (x_1 + x_2)^{-1}\\
&&\hspace*{8pt}\qquad{}\times [(-\log x_1 )^j +
(-\log x_2)^j + (-\log x_s)^j + (-\log x_t)^j].
\end{eqnarray*}

First of all, the factor $((x_1 + x_2) \theta+ x_s)^{-1}$ can be
bounded above by $2(x_s + x_t)^{-1}$ for $\theta\in[0, 1]$.
So it remains to bound the integral of the remaining factors:
\begin{eqnarray*}
&&\int_0^1\bigl ((x_1 + x_2) \theta+ x_t\bigr)^{-1} \bigl(-\log[(x_1 + x_2) \theta
]\bigr)^{j-1} \,d\theta\\
&&\qquad\le x_t ^{-1} \int_0^{\varepsilon} \bigl(-\log[(x_1+x_2) \theta]\bigr)^{j-1}
\,d\theta\\
&&\qquad\hspace*{8pt}{}+ \bigl(-\log[(x_1 + x_2) \varepsilon]\bigr)^{j-1} \int_{\varepsilon
}^1 \bigl((x_1 +
x_2) \theta+ x_t\bigr)^{-1} \,d\theta\\
&&\qquad\le x_t^{-1} j! \varepsilon\bigl(-\log[(x_1 + x_2) \varepsilon]\bigr)^j \\
&&\qquad\hspace*{8pt}{}+
\bigl(-\log
[(x_1 + x_2) \varepsilon]\bigr)^{j-1} (x_1 + x_2)^{-1} \log\biggl[\frac{x_1 +
x_2 +
x_t}{(x_1 + x_2) \varepsilon+ x_t}\biggr] \\
&&\qquad\le x_t^{-1} j! \varepsilon\bigl(-\log[(x_1 + x_2) \varepsilon]\bigr)^j \\
&&\qquad\hspace*{8pt}{}+
\bigl(-\log
[(x_1 + x_2) \varepsilon]\bigr)^{j-1} (x_1 + x_2)^{-1} \log[(x_1 + x_2)
\varepsilon]\\
&&\qquad= x_t^{-1} j! \varepsilon\bigl(-\log[(x_1 + x_2) \varepsilon]\bigr)^j +
\bigl(-\log
[(x_1 + x_2) \varepsilon]\bigr)^j (x_1 + x_2)^{-1}.
\end{eqnarray*}
In the second equality we used the fact that
\[
\int_0^{\varepsilon} (- \log\theta)^j \,d\theta= \int_{-\log
\varepsilon
}^{\infty} y^j e^{-y} \,dy \le j!
\]
and in the third inequality we used $\frac{\varepsilon(x_1+x_2) + x_t}{x_1
+ x_2 + x_t} > \varepsilon(x_1 + x_2)$ for $\varepsilon< 1$.\vspace*{1pt}

Taking $\varepsilon= x_t$, we obtain the result.
\end{pf}

%s4 ###
\section{\texorpdfstring{Proof of Claim \protect\ref{cl2}}{Proof of Claim 2}}\label{sec4}
We prove the claim by a contradiction argument. Here we use the result
from~\cite{Oliviera} that after $k = n^2 \log n \log\varepsilon$ steps
the $\cL^2$ transportation distance between the running distribution of
the Kac random walk on $S^{n-1}$ and the uniform distribution $U_{n-1}$
is less than $\varepsilon$. So by Holder's inequality, the~$\cL^1$
transportation distance is also less than $ \varepsilon$. We know that the
uniform measure $U_{n-1}(H_{\varepsilon})$ varies linearly with
$\varepsilon
$; in fact using the marginal density formula \eqref{sphere margin} for
a single coordinate on the unit sphere, together with the fact that $H
= \bigcup H^i_{\varepsilon}$ where $H^i_{\varepsilon} := \{x \in S^{n-1}\dvtx
|x_i| \le\varepsilon\}$, one sees that it is bounded above by $n^{3/2}
\varepsilon$, and similarly $U_{n-1}(H_{\varepsilon^{\alpha} +
\varepsilon}) \le
2 n^{3/2} (\varepsilon^{\alpha} + \varepsilon)$.
Next let $\alpha,\beta$ be two real numbers between $0$ and $1$ satisfying
\[
\alpha+ \beta< 1
\]
and
\[
\alpha- \beta> 1/2.
\]
Then with $\varepsilon\le n^{-3}$, one verifies easily that
\begin{equation} \label{transport cost}
\bigl(\varepsilon^{\beta} - (\varepsilon^{\alpha} + \varepsilon)
n^{3/2}\bigr) \varepsilon
^{\alpha} > \varepsilon.
\end{equation}
So if $\mu_k(H_{\varepsilon}) > \varepsilon^{\beta}$, with
$\varepsilon\le
n^{-3}$, then in order to transport the mass of~$H_{\varepsilon}$ under~$\mu_k$
in excess of $H_{\varepsilon+ \varepsilon^{\alpha}}$ under
$U_{n-1}$, the left-hand side of \eqref{transport cost} gives a~lower
bound on the transportation cost for that alone, because each particle
of mass originally in $H_{\varepsilon}$ must traverse at least a distance
of $\varepsilon^{\alpha}$ to go outside of $H_{\varepsilon+
\varepsilon^{\alpha
}}$. Since the \textit{total} transport cost cannot exceed
$\varepsilon$
after $k$ steps, this is a contradiction. Hence we must have $\mu
_k(H_{\varepsilon}) < \varepsilon^{\beta}$. One set of choices for~$\alpha$
and $\beta$ is $\alpha= 3/4$ and $\beta= 1/8$, which is the content of
Claim~\ref{cl2}.

\section*{Acknowledgments}
I would like to thank my advisor Persi Diaconis for introducing me to
this problem and pointing me to the relevant literature. I would also
like to thank the referee for suggesting numerous changes.

% imsref loaded by akundreckaite, 2012-04-04 08:44:56
% imsref loaded by akundreckaite, 2012-04-04 08:53:46
%
% imsref loaded by akundreckaite, 2012-05-22 15:19:31

%suskaldyti doi

\printaddresses

\end{document}